 \definecolor{darkblue}{RGB}{0,0,160}
\DeclareSymbolFont{usualmathcal}{OMS}{cmsy}{m}{n}
\DeclareSymbolFontAlphabet{\mathcal}{usualmathcal}
\numberwithin{equation}{section}
\theoremstyle{plain}
\newtheorem{theorem}{Theorem}[section]
\newtheorem{lemma}[theorem]{Lemma}
\newtheorem{conjecture}[theorem]{Conjecture}
\theoremstyle{definition}
\theoremstyle{remark}
\newtheorem{case[theorem]}{Case}
\title[\parbox{14cm}{\centering{Group action and applications\hspace{1in}}} \quad]{Group action and $L^2$-norm estimates of geometric problems}
\author{Thang Pham}
\address{Hanoi University of Science, Vietnam National University}
\email{thangpham.math@vnu.edu.vn}
\subjclass[2010]{ 52C10, 42B05, 11T23 }
\begin{document}
\begin{abstract} 
In 2017, by using the group theoretic approach and tools from discrete Fourier analysis, Bennett, Hart, Iosevich, Pakianathan, and Rudnev obtained a number of results on the distribution of simplices and sum-product type problems. The main purpose of this paper is to give a series of new applications of their powerful framework, namely, we focus on the product and quotient of distance sets, the $L^2$-norm of the direction set, and the $L^2$-norm of scales in difference sets. 

\textbf{Keyword}: Erd\H{o}s-Falconer distance problem, Simplex, Directions.
 \end{abstract}

\maketitle
\section{Introduction}
Let $\mathbb{F}_q$ be a finite field of order $q$, where $q$ is a prime power.  Let $O(d)$ be the orthogonal group of $d$ by $d$ matrices with entries in $\mathbb{F}_q$. 

Given an integer $k\ge 1$, we say that two $k$-simplices in $\mathbb{F}_q^d$ with vertices $(x_1, \ldots, x_{k+1})$ and $(y_1, \ldots, y_{k+1})$ are in the same congruence class if $||x_i-x_j||=||y_i-y_j||$ for all $1\le i\ne j\le k+1$. Here $||x_i-x_j||=(x_{i1}-x_{j1})^2+\cdots+(x_{id}-x_{jd})^2$. This is equivalent to say that there exist $\theta\in O(d)$ and $z\in \mathbb{F}_q^d$ such that $y_i=\theta x_i+z$ for all $1\le i\le k+1$. Given a set $E\subset \mathbb{F}_q^d$, we denote the set of congruence classes of $k$-simplcies determined by $E$ by $T_k^d(E)$. 

The question of finding the smallest threshold $\alpha$ such that $|T_k^d(E)|\gg q^{\binom{k+1}{2}}$ whenever $|E|\gg q^\alpha$ has a rich history, for instance, see \cite{S1, S2, S3, S4, S5}. The best current result is due to Bennett, Hart, Iosevich, Pakianathan, and Rudnev in \cite{BII}. More precisely, by using the group theoretic approach, they proved that for all $1\le k\le d$, if $|E|\gg q^{d-\frac{d-1}{k+1}}$, then $|T_k^d(E)|\gg q^{\binom{k+1}{2}}$. In two dimensions, they are able to obtain  better exponents, namely, $8/5$ for $k=2$ and $4/3$ for $k=1$. We also note that when $k=1$, this problem is known as the Erd\H{o}s-Falconer distance problem in the literature, we refer the interested reader to \cite{MPPRS, KPV} for the recent progress. 

It is worth noting that the group action techniques are not only useful in discrete setting, but it is also very powerful in the continuous setting. One example, we have to mention here, is an $L^2$ identity due to Liu \cite{LiuL2}, which plays an important role in the recent breakthrough on the pinned Falconer distance problem \cite{DIOWZ, GIOW}. Similar results of this approach can also be found in \cite{GILP, GIM, Liu2}. 

The main purpose of this paper is to provide a series of new applications of this group theoretic approach, namely, on the product and quotient of distance sets, the $L^2$-norm of the direction set, and the $L^2$-norm of scales in difference sets. 

Throughout this paper, we denote the set of square elements in $\mathbb{F}_q$ by $(\mathbb{F}_q)^2$, and by $X\gg Y$ we mean there exists an absolutely positive constant $C$ such that $X\ge CY$. 
\subsection{Product and Quotient of distance sets}
For $E\subset \mathbb{F}_q^2$, the distance set $\Delta(E)$ is the set of all distances determined by pairs of points in $E$, namely, 
\[\Delta(E):=\left\lbrace ||x-y||\colon x, y\in E \right\rbrace.\]
We define the product, quotient, and sum of $\Delta(E)$ as follows: 
\[\frac{\Delta(E)}{\Delta(E)}:=\left\lbrace \frac{a}{b}\colon a, b\in \Delta(E), ~b\ne 0  \right\rbrace, ~\Delta(E)\cdot \Delta(E):=\left\lbrace a\cdot b\colon a, b\in \Delta(E) \right\rbrace,\] and \[\Delta(E)+\Delta(E):=\left\lbrace a+b\colon a, b\in \Delta(E) \right\rbrace.\]

It was proved in \cite{S4} that if $|E|\gg q^{4/3}$, then $|\Delta(E)|\gg q$. This result implies immediately that $|\Delta(E)+\Delta(E)|,~ |\Delta(E)\cdot \Delta(E)|,~ |\Delta(E)/\Delta(E)|\gg q$. In the prime field setting, one can improve the exponent $4/3$ to $5/4$ by using the recent result due to Murphy et al. in \cite{MPPRS}. In general, as in the original distance problem, the conjectured exponent is expected to be $1$. To support this claim, one can take $q=p^2$ and $E=\mathbb{F}_p^2$, then we have $|E|=q$ and $|\Delta(E)+\Delta(E)|,~ |\Delta(E)\cdot \Delta(E)|,~ |\Delta(E)/\Delta(E)|=p=q^{1/2}$.

For the sum of distance sets, the exponents $8/7$ and $11/10$ have been achieved over arbitrary finite fields and prime fields in \cite{P2017} and \cite{CKP}, respectively.

For the quotient of distance sets, by using discrete Fourier analysis, Iosevich, Koh, and Parshall \cite{IKP} proved that if $|E|\ge 9q$, then 
\begin{equation}\label{quotient-IKP}
\frac{\Delta(E)}{\Delta(E)}=\mathbb{F}_q.
\end{equation}
They also constructed examples to show that the condition $|E|\gg q$ is optimal. 

Using the group theoretic approach, we prove the following theorem on the size of $\Delta(E)\cdot \Delta(E)$. 
\begin{theorem}\label{thm1}
For $E\subset \mathbb{F}_q^2$ with  $|E|\gg q^{8/7}$, we have
\[|\Delta(E)\cdot \Delta(E)|\gg q.\]
\end{theorem}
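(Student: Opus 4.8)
The plan is to realize $\Delta(E)\cdot\Delta(E)$ as the support of a multiplicative convolution and apply Cauchy--Schwarz, so that a lower bound on the product set follows from an upper bound on a fourth moment. For $t\in\mathbb{F}_q^*$ set $\nu(t)=\#\{(x,y)\in E\times E:||x-y||=t\}$, and for $s\in\mathbb{F}_q^*$ set
\[\mu(s)=\sum_{\substack{a,b\in\mathbb{F}_q^*\\ ab=s}}\nu(a)\nu(b)=\#\{(x,y,z,w)\in E^4: ||x-y||\cdot||z-w||=s,\ ||x-y||,||z-w||\neq0\}.\]
The support of $\mu$ lies in $\Delta(E)\cdot\Delta(E)$, and writing $N=\sum_{t\neq0}\nu(t)=\#\{(x,y)\in E^2:||x-y||\neq0\}$ we have $\sum_s\mu(s)=N^2$. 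Cauchy--Schwarz then gives
\[|\Delta(E)\cdot\Delta(E)|\ \ge\ \frac{\big(\sum_s\mu(s)\big)^2}{\sum_s\mu(s)^2}\ =\ \frac{N^4}{\sum_s\mu(s)^2}.\]
A first step is to check that the zero-distance pairs are negligible for $|E|\gg q^{8/7}$, so that $N\gg|E|^2$; this is immediate when $-1$ is a nonsquare and a short incidence estimate otherwise. It then suffices to prove $\sum_s\mu(s)^2\lesssim|E|^8/q$.

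The next step is to open up $\sum_s\mu(s)^2$ with multiplicative characters. Since $\mu=\nu\star\nu$ is a multiplicative convolution, Plancherel on $\mathbb{F}_q^*$ yields
\[\sum_s\mu(s)^2=\frac1{q-1}\sum_{\chi}\big|\widehat{\nu}(\chi)\big|^4,\qquad \widehat{\nu}(\chi)=\sum_{t\neq0}\chi(t)\nu(t)=\sum_{\substack{x,y\in E\\ ||x-y||\neq0}}\chi(||x-y||),\]
the sum running over all multiplicative characters $\chi$. The principal character $\chi_0$ contributes $\widehat{\nu}(\chi_0)=N$, giving the term $N^4/(q-1)\approx|E|^8/q$, which is exactly the target bound. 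Hence the whole theorem reduces to controlling the off-diagonal fourth moment $\sum_{\chi\neq\chi_0}|\widehat{\nu}(\chi)|^4\lesssim|E|^8$; once this holds, $|\Delta(E)\cdot\Delta(E)|\ge (q-1)N^4/\big(N^4+\sum_{\chi\neq\chi_0}|\widehat\nu(\chi)|^4\big)\gg q$.

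The heart of the argument is therefore the estimation of the character sums $\widehat{\nu}(\chi)=\sum_{x,y\in E}\chi(||x-y||)$ for nonprincipal $\chi$, and this is where the group-theoretic framework of Bennett--Hart--Iosevich--Pakianathan--Rudnev enters. Writing $||x-y||=Q(x-y)$ for the norm form $Q$ and expanding $\chi\circ Q$ through its additive Fourier transform produces Gauss sums and ties $\widehat{\nu}(\chi)$ to the orthogonal group $O(2)$ acting on the spheres $\{Q=t\}$; the relevant moments of these sums are precisely what the framework evaluates, with a principal (main) term plus a power-saving error governed by the nontrivial representations of $O(2)$. The plan is to feed this into the fourth moment: a crude pointwise bound $\max_{\chi\neq\chi_0}|\widehat\nu(\chi)|\lesssim q|E|$ together with the second-moment (distance-energy) identity $\sum_\chi|\widehat\nu(\chi)|^2=(q-1)\sum_{t\neq0}\nu(t)^2$ already yields a nontrivial result, but only down to an exponent near $3/2$; the refinement to $8/7$ requires the genuine fourth-moment cancellation supplied by the group action, and arises from balancing the two error terms produced there against the main term $|E|^8/q$.

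The step I expect to be the main obstacle is exactly this fourth-moment bound $\sum_{\chi\neq\chi_0}|\widehat\nu(\chi)|^4\lesssim|E|^8$: the second moment and the pointwise maximum are comparatively routine (Parseval plus the standard distance-energy estimate and a Gauss-sum bound), but extracting enough cancellation in the fourth moment to reach the threshold $q^{8/7}$ is where the full strength of the group-theoretic $L^2$ machinery is needed, and the fractional exponent $8/7$ should emerge from optimizing its error terms rather than from any single clean inequality. A secondary technical point is the treatment of zero distances and the dependence on $q\bmod 4$, which affect $N$ and the geometry of the null cone but not the main estimate.
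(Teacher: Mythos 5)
Your reduction is sound and, after a change of language, coincides with the paper's opening step: your Cauchy--Schwarz inequality together with the Plancherel identity $\sum_s\mu(s)^2=\frac{1}{q-1}\sum_\chi|\widehat{\nu}(\chi)|^4$ on $\mathbb{F}_q^*$ reduces the theorem to the product-energy bound $\sum_s\mu(s)^2\ll |E|^8/q$, which is exactly the estimate at the heart of the paper (your treatment of zero-distance pairs also matches the paper's use of $N_0(E)\ll |E|^2/q+q|E|$). But there is a genuine gap at precisely the point you flag as ``the main obstacle'': the fourth-moment bound $\sum_{\chi\ne\chi_0}|\widehat{\nu}(\chi)|^4\lesssim |E|^8$ is never proved --- it is only asserted to follow from the Bennett--Hart--Iosevich--Pakianathan--Rudnev framework via Gauss sums and representations of $O(2)$, with no actual mechanism supplied. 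That estimate \emph{is} the theorem: everything before it is two lines of Cauchy--Schwarz and Plancherel, and, as you yourself observe, no pointwise or Gauss-sum bound on $\widehat{\nu}(\chi)$ produces fourth-moment cancellation at the $q^{8/7}$ threshold (the pointwise route stalls near exponent $3/2$). Invoking ``the framework'' here is not a proof, because the framework as developed in the cited work concerns congruence of configurations (equal distances), not multiplicative twists of the distance function, and the transfer is exactly what is missing.

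What closes the gap in the paper is a concrete idea absent from your sketch: a group-action lemma for \emph{products} of distances. If $||x_1-y_1||\cdot||x_2-y_2||=||x_3-y_3||\cdot||x_4-y_4||\ne 0$ and the ratio $||x_4-y_4||/||x_2-y_2||$ is a square $t^2$, then $((x_1,x_2),(y_1,y_2))$ is carried to $((x_3,x_4),(y_3,y_4))$ by an element of $G=G_1\times\mathbb{F}_q^4$, where $G_1=\{\mathrm{diag}(r_1\theta_1,r_2\theta_2)\colon r_1r_2=1,\ \theta_1,\theta_2\in O(2)\}$ (rescale by $1/t$ and $t$ in the two blocks). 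To make the square-ratio hypothesis automatic, the paper first splits $E\times E$ by the quadratic class of the distance and keeps the larger half, so that within that half all distance ratios are squares. The energy is then dominated by $\sum_{\theta\in G_1,\,z}\mu_\theta(z)^2$, where $\mu_\theta(z)$ counts incidences of $F=E\times E$ under the affine action; additive Fourier analysis gives $\widehat{\mu_\theta}(\xi)=q^4\widehat{F}(-\xi)\widehat{F}(\theta^T\xi)$, the sum over $\theta$ and $\xi=(\xi_1,\xi_2)\ne 0$ is split according to the vanishing of $\xi_1,\xi_2$, and the decisive input is the spherical-average bound $\max_{t\ne 0}\sum_{||\eta||=t}|\widehat{E}(\eta)|^2\ll |E|^{3/2}/q^3$ (Lemma \ref{spherical-average}). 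This yields $\sum_{\theta,z}\mu_\theta(z)^2\ll |E|^8/q+q|E|^6+q^3|E|^{9/2}$, and the exponent $8/7$ emerges from requiring $q^3|E|^{9/2}\le |E|^8/q$ --- so your guess about where $8/7$ comes from is right, but to complete your argument you would need to supply this lemma and computation (or an equivalent), not merely cite the machinery.
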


One might wonder if the exponent $1$ can be attained by adapating the method in the paper \cite{IKP}. However, Iosevich and Koh \cite{IK2017} showed that the two problems do not behave in the same way, and they actually obtained the exponent $5/4$ in two dimensions.

Compared to the sum of distance sets, it is necessary to say that the study of the product set would be much harder. The main reason comes from an observation that $\Delta(E)+\Delta(E)=\Delta(E\times E)$, so the sum set can be reduced to the original distance problem for Cartesian product sets, which possesses some nicely additive structures. 

On Iosevich, Koh, and Parshall's result of $\Delta(E)/\Delta(E)$, there are two perspectives we want to mention here: their proof is very sophisticated, and it only tells us that the quotient contains the whole field, so given an element $r\in \mathbb{F}_q$, it is not clear how many tuples $(a, b, c, d)\in E^4$ such that $||a-b||/||c-d||=r$. Using the group theoretic method, we prove the following.
\begin{theorem}\label{thm-quotient}
For $E\subset \mathbb{F}_q^2$ with $q\equiv 3\mod 4$. Assume that $|E|\gg q$, then for each non-zero square $r$ in $\mathbb{F}_q$, the number of quadruples $(a, b, c, d)\in E^4$ such that $||a-b||/||c-d||=r$ is at least $\gg |E|^4q^{-1}$. In particular, 
\[\left(\mathbb{F}_q\right)^2\subseteq \frac{\Delta(E)}{\Delta(E)}.\]
\end{theorem}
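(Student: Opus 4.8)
The plan is to count the quadruples directly and isolate a main term via the rotation-group action, exploiting the positivity that this averaging produces. Write $N$ for the number of quadruples $(a,b,c,d)\in E^4$ with $\|a-b\|=r\|c-d\|$, and group them by the value $t=\|c-d\|$. Since $q\equiv 3\bmod 4$ the quadratic form $\|\cdot\|$ is anisotropic, so $\|x-y\|=0$ forces $x=y$; hence the degenerate case $t=0$ contributes exactly $|E|^2$ quadruples (namely $a=b$, $c=d$), and it remains to bound below $N_{\neq 0}:=\sum_{t\neq 0}\mu(rt)\mu(t)$, where $\mu(t):=\#\{(x,y)\in E^2 : \|x-y\|=t\}$.

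Next I would bring in the group action. Since $r$ is a non-zero square, fix $\lambda$ with $r=\lambda^2$, so that $\|\lambda(c-d)\|=r\|c-d\|$. The rotation group $SO(2)$, which is cyclic of order $q+1$ in the anisotropic case, acts simply transitively on each non-zero sphere $\{x:\|x\|=s\}$, and each such sphere has exactly $q+1$ points. Consequently, for non-zero $u,v$ one has $\|v\|=r\|u\|$ precisely when $v=\theta\lambda u$ for a unique $\theta\in SO(2)$. Writing $M(\theta):=\#\{(a,b,c,d)\in E^4 : a-b=\theta\lambda(c-d)\}$ and summing this bijection over $\theta$, the non-degenerate quadruples are each counted once while the fully degenerate ones ($a=b$, $c=d$) are counted $q+1$ times, giving the identity $\sum_{\theta\in SO(2)}M(\theta)=N_{\neq 0}+(q+1)|E|^2$.

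The heart of the argument is to compute $\sum_\theta M(\theta)$ by Fourier expansion. With $f(m):=\sum_{x\in E}\chi(m\cdot x)$ for a fixed non-trivial additive character $\chi$, expanding the linear constraint $a-b-\theta\lambda c+\theta\lambda d=0$ yields $M(\theta)=q^{-2}\sum_m |f(m)|^2|f(\lambda\theta^{T}m)|^2$. As $\theta$ runs over $SO(2)$, the vector $\lambda\theta^{T}m$ (for $m\neq 0$) runs exactly once over the sphere of radius $r\|m\|$, so $\sum_\theta|f(\lambda\theta^{T}m)|^2=\sum_{\|n\|=r\|m\|}|f(n)|^2\geq 0$, while the frequency $m=0$ produces $(q+1)|f(0)|^2=(q+1)|E|^2$. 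Therefore
\[
\sum_{\theta}M(\theta)=q^{-2}(q+1)|E|^4+q^{-2}\sum_{m\neq 0}|f(m)|^2\sum_{\|n\|=r\|m\|}|f(n)|^2\geq q^{-2}(q+1)|E|^4 .
\]
The key point, and the reason the group averaging is effective here, is that every term in the remaining sum is manifestly non-negative, so no error term needs to be controlled.

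Finally I would combine the two identities: $N_{\neq 0}=\sum_\theta M(\theta)-(q+1)|E|^2\geq (q+1)q^{-2}|E|^4-(q+1)|E|^2$. Since $|E|\gg q$ we have $|E|^2\gg q^2$, so the first term dominates and $N\geq N_{\neq 0}\gg |E|^4 q^{-1}$, which is the claimed count; in particular it is positive, so $r\in\Delta(E)/\Delta(E)$. As $r$ was an arbitrary non-zero square and $0$ lies in the quotient trivially, $(\mathbb{F}_q)^2\subseteq\Delta(E)/\Delta(E)$. The main obstacle is bookkeeping rather than analysis: one must correctly track the degenerate quadruples and the $(q+1)$-fold overcount in the orbit sum, and verify the simply-transitive action on spheres that makes the Fourier frequencies collapse onto non-negative sphere sums, both of which rely essentially on the anisotropy coming from $q\equiv 3\bmod 4$.
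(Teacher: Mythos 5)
Your proof is correct and follows essentially the same route as the paper: you average the quadruple count over the rotation group (the paper works with $O(2)\ltimes \mathbb{F}_q^2$ via the counting function $\eta_\theta(z)$ for $u-\sqrt{r}\,\theta v=z$, while your $\sum_{\theta\in SO(2)}M(\theta)$ is the same energy $\sum_{\theta,z}\eta_\theta(z)^2$ restricted to rotations), lower-bound it by positivity---your Fourier step, after discarding the non-negative $m\ne 0$ terms, is precisely the paper's Cauchy--Schwarz applied in $z$---and then subtract the diagonal $a=b$, $c=d$ contribution, with anisotropy from $q\equiv 3\bmod 4$ eliminating zero distances exactly as in the paper. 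The only blemish is a typo in your prose (the $m=0$ frequency contributes $(q+1)|f(0)|^4=(q+1)|E|^4$, not $(q+1)|E|^2$; your displayed formula is correct), and your simply-transitive $SO(2)$ bookkeeping is in fact slightly cleaner, yielding exact constants where the paper argues up to the stabilizer size $\sim|O(1)|$.
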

We remark that the condition $q\equiv 3\mod 4$  is needed in the proof of this theorem, which helps to avoid pairs of zero distances.
\subsection{Distribution of directions}
Given $E\subset \mathbb{F}_q^2$, the $L^2$-norm of the direction set bounds the number of quadruples $(u, v, x, y)\in E^4$ such that 
\begin{equation}\label{direction-vv}(u-v)=\lambda \cdot (x-y), ~x\ne y, ~u\ne v,\end{equation}
for some $\lambda\ne 0$. We denote the number of such quadruples by $L^2(D_E)$. In the next theorem, we give an upper bound for this quantity. 
\begin{theorem}\label{thm:direction}
For $E\subset \mathbb{F}_q^2$, we have 
\[\left\vert L^2(D_E)-\frac{|E|^4}{q}\right\vert \ll q^2|E|^2.\]
\end{theorem}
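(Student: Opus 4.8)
The plan is to work directly from the definition, in which $L^2(D_E)$ counts the quadruples $(u,v,x,y)\in E^4$ with $u\neq v$, $x\neq y$ and $u-v=\lambda(x-y)$ for a (necessarily unique) nonzero $\lambda$. First I would record this as a sum over the scale $\lambda$: setting
\[M(\lambda):=\#\{(u,v,x,y)\in E^4: u-v=\lambda(x-y)\}\]
and noting that for $\lambda\neq0$ the solutions with $u=v$ are exactly those with $x=y$, of which there are $|E|^2$, one obtains
\[L^2(D_E)=\sum_{\lambda\in\mathbb{F}_q^*}\bigl(M(\lambda)-|E|^2\bigr)=\sum_{\lambda\neq0}M(\lambda)-(q-1)|E|^2.\]

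The crucial point is that $u-v=\lambda(x-y)$ is a single vector equation in $\mathbb{F}_q^2$, so expanding its indicator with a nontrivial additive character $\chi$ decouples the four points. Writing $\widehat{E}(m)=\sum_{x\in E}\chi(x\cdot m)$, this gives $M(\lambda)=q^{-2}\sum_{m\in\mathbb{F}_q^2}|\widehat{E}(m)|^2\,|\widehat{E}(\lambda m)|^2$. Summing over $\lambda\neq0$ and separating the frequency $m=0$, where $|\widehat{E}(0)|^2=|E|^2$, produces a main contribution $(q-1)|E|^4/q^2$; for each $m\neq0$ the inner sum $\sum_{\lambda\neq0}|\widehat{E}(\lambda m)|^2$ depends only on the line $\ell\ni m$ through the origin and equals $P_\ell:=\sum_{\xi\in\ell\setminus\{0\}}|\widehat{E}(\xi)|^2$. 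Grouping the remaining frequencies by their line, I would arrive at the exact identity
\[L^2(D_E)=\frac{(q-1)|E|^4}{q^2}+\frac{1}{q^2}\sum_{\ell}P_\ell^2-(q-1)|E|^2,\]
the last sum running over the $q+1$ lines through the origin.

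It then remains to estimate the three pieces. The first differs from the claimed main term $|E|^4/q$ by $|E|^4/q^2$, which is $\le q^2|E|^2$ since $|E|\le q^2$; the last term is $\le q|E|^2$. For the middle term I would use that the $P_\ell$ are nonnegative and that Plancherel gives $\sum_\ell P_\ell=\sum_{m\neq0}|\widehat{E}(m)|^2=q^2|E|-|E|^2$, whence $\sum_\ell P_\ell^2\le\bigl(\sum_\ell P_\ell\bigr)^2\le q^4|E|^2$ and therefore $q^{-2}\sum_\ell P_\ell^2\le q^2|E|^2$. Adding the three bounds yields $\bigl|L^2(D_E)-|E|^4/q\bigr|\ll q^2|E|^2$.

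Once the scale parametrization is in place the estimate is short, so the only genuine obstacle is the initial choice of representation. Encoding parallelism through the determinant $\det(u-v,x-y)=0$ seems natural, but its indicator expands as a one-dimensional character sum that does not factor over the four points and merely reproduces $L^2(D_E)$, giving a circular identity. Recognizing that the vector equation $u-v=\lambda(x-y)$ is the representation whose character expansion genuinely separates the variables is the key step, after which the bound follows from two applications of Plancherel together with the trivial inequality $\sum_\ell P_\ell^2\le\bigl(\sum_\ell P_\ell\bigr)^2$ for nonnegative $P_\ell$.
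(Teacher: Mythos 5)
Your proposal is correct and is essentially the paper's own proof in different clothing: your $M(\lambda)$ coincides with the paper's $\sum_z \gamma_\lambda(z)^2$, your identity $M(\lambda)=q^{-2}\sum_m |\widehat{E}(m)|^2|\widehat{E}(\lambda m)|^2$ is exactly the paper's computation $\widehat{\gamma_\lambda}(\xi)=q^2\widehat{E}(-\xi)\widehat{E}(\lambda\xi)$ combined with Plancherel in $z$, and your grouping of the nonzero frequencies into the $q+1$ lines through the origin (so that the off-diagonal term becomes $q^{-2}\sum_\ell P_\ell^2$) is the paper's reduction to $T_L(E)=\sum_{m\in L}|\widehat{E}(m)|^2$. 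The only deviations are minor and both defensible: you excise the degenerate quadruples $u=v$, $x=y$ explicitly via the exact subtraction $(q-1)|E|^2$ (the paper's identity $L^2(D_E)=\sum_{\lambda,z}\gamma_\lambda(z)^2$ silently includes them, harmlessly, within the $O(q^2|E|^2)$ error), and where the paper bounds each line sum by $q^{-2}|E|$ through the character-sum argument that a line in $\mathbb{F}_q^2$ has $q$ points, you reach the numerically identical bound $q^{-2}\sum_\ell P_\ell^2\le q^{-2}\bigl(\sum_\ell P_\ell\bigr)^2\le q^2|E|^2$ from nonnegativity and Plancherel alone --- a slightly more elementary route, since $\max_\ell P_\ell\le\sum_\ell P_\ell$ already recovers the paper's per-line estimate at this level of precision.
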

If we are interested in the number of directions spanned by a set $E\subset \mathbb{F}_q^2$, then Theorem \ref{thm:direction} says that we have at least $\gg q$ directions as long as $|E|\gg q^{3/2}$. However, it is known in the literature that the condition $|E|\gg q$ would be sufficient,  see \cite{IMP} for example. When the size of $E$ is very small, say $|E|\ll q$, we refer the reader to \cite{Sz} for related results.

To the best of our knowledge, we are not aware of any known results on the magnitude of $L^2(D_E)$ for general sets $E$. If $E=A\times A$ with $A\subset \mathbb{F}_q$, then $L^2(D_E)$ is almost equal to the number of $(x_1, x_2, \ldots, x_8)\in A^8$ such that $(x_1-x_2)(x_3-x_4)=(x_5-x_6)(x_7-x_8)$. The number of such tuples has been considered in a series of papers on sum-product problems, we refer the reader to \cite{MPRRS} and references therein for more details. 

It follows from Theorem \ref{thm:direction} that $L^2(D_E)=(1+o(1))|E|^4/q$ whenever $|E|\gg q^{3/2}$. We do not believe this result is sharp, and offer the following conjecture. 
\begin{conjecture}
Let $E$ be a subset of $\mathbb{F}_q^2$. If $|E|\gg q^{1+\epsilon}$ for any $\epsilon>0$, then 
\[L^2(D_E)=(1+o(1))\frac{|E|^4}{q}.\]
\end{conjecture}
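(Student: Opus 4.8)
<br>

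The plan is to treat $L^2(D_E)$ as a genuine second moment and attempt to prove equidistribution of directions. Writing $\mathbb{P}^1(\mathbb{F}_q)$ for the $q+1$ directions and, for a direction $\omega$, setting $N(\omega)=\#\{(u,v)\in E^2:u\ne v,\ u-v\in\omega\}$, one has $\sum_{\omega}N(\omega)=|E|^2-|E|$ and $L^2(D_E)=\sum_{\omega}N(\omega)^2$. The Cauchy--Schwarz minimum of the latter is $(|E|^2-|E|)^2/(q+1)=(1+o(1))|E|^4/q$, so the conjecture is exactly the statement that the variance $\sum_{\omega}\big(N(\omega)-|E|^2/q\big)^2$ is $o(|E|^4/q)$; that is, the directions are essentially equidistributed. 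The natural tool is the additive-character expansion underlying Theorem~\ref{thm:direction}.

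First I would make the Fourier reduction explicit. With a nontrivial additive character $\chi$, $\widehat{E}(\xi)=\sum_{x\in E}\chi(x\cdot\xi)$, and $f(w)=\#\{(x,y)\in E^2:x-y=w\}$ (so $\sum_w f(w)\chi(w\cdot\xi)=|\widehat{E}(\xi)|^2$), two nonzero vectors are parallel iff $\det(w,w')=w_1w_2'-w_2w_1'=0$. Expanding the indicator of $\{\det=0\}$ over characters, the trivial frequency produces the main term $|E|^4/q$, the degenerate quadruples contribute $O(|E|^3)$, and after the change of variables $\eta=tJw'$ the remaining error becomes
\[
\mathrm{Err}=\frac{1}{q}\sum_{t\ne 0}\sum_{\eta}f\!\left(-J\eta/t\right)|\widehat{E}(\eta)|^2,\qquad J(\eta_1,\eta_2)=(\eta_2,-\eta_1).
\]
The useful feature is that summing over $t\ne 0$ collapses the argument of $f$ onto a punctured line through the origin, and $\sum_{t\ne0}f(-J\eta/t)=N(\omega_\eta)$, the direction count along $J\eta$. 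Hence
\[
\mathrm{Err}=\frac{1}{q}\sum_{\eta\ne0}N(\omega_\eta)\,|\widehat{E}(\eta)|^2+O(|E|^3),
\]
so the error is the Fourier energy of $E$ weighted by the very direction counts we are trying to flatten.

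The routine estimate now combines Plancherel, $\sum_{\eta\ne0}|\widehat{E}(\eta)|^2\le q^2|E|$, with the trivial bound $N(\omega)=\sum_{\ell\parallel\omega}|E\cap\ell|(|E\cap\ell|-1)\le(q-1)|E|$, giving $\mathrm{Err}\ll q^2|E|^2$ and recovering Theorem~\ref{thm:direction} together with the range $|E|\gg q^{3/2}$. To reach $|E|\gg q^{1+\epsilon}$ one must gain a power of $q$, either through cancellation in the $\eta$-sum or through a better pointwise bound on the weight. This is where I expect the attempt to fail, and I believe the obstruction is structural rather than technical: both inputs are simultaneously saturated by $E=\ell_1\cup\cdots\cup\ell_k$, a union of $k\approx|E|/q$ parallel full lines. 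There the Fourier mass concentrates on a single dual line, one direction $\omega_0$ carries $N(\omega_0)\approx q|E|$, and the single term $N(\omega_0)^2\asymp q^2|E|^2$ already forces $\mathrm{Err}\asymp q^2|E|^2$. For $q^{1+\epsilon}\le|E|\ll q^{3/2}$ this gives $L^2(D_E)=(1+o(1))q^2|E|^2\gg|E|^4/q$, so the conjecture as stated appears to be false and the threshold in Theorem~\ref{thm:direction} is already sharp for arbitrary $E$.

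Consequently, what I would actually try to prove is the conjecture under a flatness hypothesis that removes the bad direction, and the pleasant point is that the natural one recovers exactly the stated exponent. If $E$ satisfies the optimal Salem-type bound $\max_{\eta\ne0}|\widehat{E}(\eta)|\ll|E|^{1/2}$ (the square-root-cancellation size forced by Plancherel on average, valid for random and more generally Fourier-uniform sets), then, using $\sum_{\eta\ne0}N(\omega_\eta)=(q-1)(|E|^2-|E|)\ll q|E|^2$ in the displayed identity, one gets $\mathrm{Err}\ll\frac{1}{q}\cdot|E|\cdot q|E|^2=|E|^3$, which is $o(|E|^4/q)$ precisely when $|E|/q\to\infty$, i.e.\ when $|E|\gg q^{1+\epsilon}$. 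Thus the real obstacle is exactly the possible failure of Fourier flatness, and I would state and prove the conjecture with the non-concentration hypothesis $\max_{\eta\ne0}|\widehat{E}(\eta)|\ll|E|^{1/2}$ (or any line-distribution condition implying it), leaving the unconditional statement open — and, in view of the parallel-line example, almost certainly in need of this extra assumption.
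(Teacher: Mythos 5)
There is no proof in the paper to compare against: the statement is one of the paper's two closing conjectures, and the paper records only that Theorem \ref{thm:direction} yields $L^2(D_E)=(1+o(1))|E|^4/q$ once $|E|\gg q^{3/2}$. Your proposal does not prove the conjecture --- it refutes it as stated, and after checking your computations I believe the refutation is correct. For $E=\mathbb{F}_q\times A$ with $|A|=k$, $|E|=kq$, the horizontal direction carries $N(\omega_0)=kq(q-1)$ pairs, each of the other $q$ directions carries $qk(k-1)$ pairs (this sums to $|E|^2-|E|$, as it must), and therefore exactly
\[
L^2(D_E)=\sum_{\omega}N(\omega)^2=k^2q^2(q-1)^2+q^3k^2(k-1)^2=(1+o(1))\left(q^2|E|^2+\frac{|E|^4}{q}\right).
\]
For $q^{1+\epsilon}\ll|E|\ll q^{3/2}$, i.e. $1\ll k\ll q^{1/2}$, the first term dominates and $L^2(D_E)/(|E|^4/q)\asymp q/k^2\to\infty$, so the conjectured asymptotic fails throughout the conjectured range under its natural reading (the hypothesis holding for some fixed $\epsilon>0$). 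The same family shows the error term $q^2|E|^2$ in Theorem \ref{thm:direction} is attained, so that theorem is sharp for arbitrary sets and $q^{3/2}$ is the true threshold for the unconditional $L^2$ flattening; any unconditional improvement is impossible, not merely out of reach of the method.

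Your supporting identity also checks out and is essentially the paper's proof of Theorem \ref{thm:direction} in unnormalized coordinates: expanding $1_{\det(w,w')=0}=q^{-1}\sum_{t}\chi(t\det(w,w'))$ gives the exact formula
\[
L^2(D_E)=\frac{(|E|^2-|E|)^2}{q}+\frac{1}{q}\sum_{\eta\ne0}N(\omega_{J\eta})\,|\widehat{E}(\eta)|^2-\frac{(q-1)(|E|^3-|E|^2)}{q},
\]
whose middle, nonnegative term is the quantity the paper bounds via $\gamma_\lambda$ and $T_L(E)$. Your conditional rescue is correct as well: under $\max_{\eta\ne0}|\widehat{E}(\eta)|\ll|E|^{1/2}$ one gets, using $\sum_{\eta\ne0}N(\omega_{J\eta})=(q-1)(|E|^2-|E|)$, the bound $\mathrm{Err}\ll|E|^3=o(|E|^4/q)$ whenever $q=o(|E|)$; and the parallel-line example violates exactly this hypothesis, since its Fourier mass concentrates on the dual line $\xi_1=0$ where $|\widehat{E}|$ reaches size about $qk^{1/2}\gg|E|^{1/2}$. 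Two small remarks: your $N(\omega_\eta)$ should be the count along the direction of $J\eta$ (harmless, since $\eta\mapsto J\eta$ permutes directions), and the flatness hypothesis is stronger than necessary --- the line-distribution condition $\max_\omega N(\omega)\ll|E|^2/q$ already gives $\mathrm{Err}\le q^{-1}\max_\omega N(\omega)\sum_{\eta\ne0}|\widehat{E}(\eta)|^2\ll|E|^3$ and is close to necessary, because $\max_\omega N(\omega)^2\le L^2(D_E)$ forces it (up to constants) whenever the conjectured asymptotic holds. In short: as a proof of the stated conjecture the proposal fails, but for the right reason --- the conjecture itself needs a non-concentration hypothesis of the kind you formulate, or a weakening of the conclusion.
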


To support this conjecture, we provide two examples as follows. The first example says that the conjecture does not hold when $|E|\ll q$. More precisely, assume $q=p^2$, take $E=\mathbb{F}_p^2$, then a direct computation gives $|E|=q$ and $L^2(D_E)\sim p^7>|E|^4q^{-1}=p^6.$ The second example says that one can construct sets of arbitrary large (more than $q$) such that $L^2(D_E)\sim |E|^4/q$. To see this, let $A\subset \mathbb{F}_q$ and set $E=\mathbb{F}_q\times A$. By a direct computation, we have $L^2(D_E)\sim |A|^4q^3=|E|^4/q$. 

\subsection{Scales in difference sets}
For $E\subset \mathbb{F}_q^2$, an element $\lambda\in \mathbb{F}_q$ is called a scale in the difference set $E-E$ if there exist $u_1, v_1, u_2, v_2\in E$ such that 
\[(u_1-v_1)=\lambda(u_2-v_2).\]
We denote the set of all scales in $E-E$ by $\frac{E-E}{E-E}$. In this paper, we are interested in bounding the $L^2$-norm, i.e. the number of tuples $(u_1, v_1, u_2, v_2, u_3, v_3, u_4, v_4)\in E^8$ such that 
\[(u_1-v_1)=\lambda(u_2-v_2), ~(u_3-v_3)=\lambda(u_4-v_4),\]
for some $\lambda\ne 0$. We denote the number of such tuples by $L^2(S_E)$.

With the same approach, we have an upper bound for the $L^2$-norm of scales. 
\begin{theorem}\label{thm-scale}
For $E\subset \mathbb{F}_q^2$, we have 
\[\left\vert L^2(S_E)- \frac{|E|^8}{q^3}\right\vert \ll |E|^6+q^2|E|^5.\]
\end{theorem}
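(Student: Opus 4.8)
The plan is to convert $L^2(S_E)$ into a single character sum, extract a main term, and control the remainder by Plancherel, in direct parallel with the proof of Theorem~\ref{thm:direction}. For $\lambda\in\mathbb{F}_q^{*}$ set
\[f(\lambda):=\#\{(u_1,v_1,u_2,v_2)\in E^4 : u_1-v_1=\lambda(u_2-v_2)\}.\]
Because the two relations defining $L^2(S_E)$ share the same $\lambda$ but act on disjoint pairs, one has $L^2(S_E)=\sum_{\lambda\ne 0}f(\lambda)^2$, up to the fully degenerate tuples ($u_1=v_1$, $u_2=v_2$, $u_3=v_3$, $u_4=v_4$) for which every $\lambda$ is admissible; these number $|E|^4$ and contribute $\ll q|E|^4$, which is negligible against the claimed error. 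Fixing a nontrivial additive character $\chi$ and writing $\widehat{E}(m)=\sum_{x\in E}\chi(m\cdot x)$, expanding the vector equation $u_1-v_1-\lambda(u_2-v_2)=0$ through the orthogonality of characters yields
\[f(\lambda)=\frac{1}{q^2}\sum_{m\in\mathbb{F}_q^2}|\widehat{E}(m)|^2\,|\widehat{E}(\lambda m)|^2.\]

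Next I would split off the main term. The frequency $m=0$ contributes $\frac{1}{q^2}|\widehat{E}(0)|^4=\frac{|E|^4}{q^2}$, so writing $g(\lambda):=\frac{1}{q^2}\sum_{m\ne 0}|\widehat{E}(m)|^2|\widehat{E}(\lambda m)|^2\ge 0$ and squaring $f(\lambda)=\frac{|E|^4}{q^2}+g(\lambda)$ gives
\[L^2(S_E)=(q-1)\frac{|E|^8}{q^4}+\frac{2|E|^4}{q^2}\sum_{\lambda\ne 0}g(\lambda)+\sum_{\lambda\ne 0}g(\lambda)^2.\]
The first summand equals $\frac{|E|^8}{q^3}-\frac{|E|^8}{q^4}$, producing the main term $\frac{|E|^8}{q^3}$ together with an error $\frac{|E|^8}{q^4}\le |E|^6$ (using $|E|\le q^2$). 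Thus it remains to bound the two sums in $g$.

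For the cross term I would invoke Plancherel, $\sum_{m\in\mathbb{F}_q^2}|\widehat{E}(m)|^2=q^2|E|$. Since $\lambda\mapsto\lambda m$ is a bijection onto the nonzero multiples of any fixed $m\ne 0$, we have $\sum_{\lambda\ne 0}|\widehat{E}(\lambda m)|^2\le q^2|E|$, whence $\sum_{\lambda\ne 0}g(\lambda)\le |E|\sum_{m\ne 0}|\widehat{E}(m)|^2\le q^2|E|^2$ and the cross term is $\ll |E|^6$. (This first-moment bound is exactly what underlies Theorem~\ref{thm:direction}.) The main obstacle is the second-moment sum, where the full eight-fold structure sits; here I would write
\[\sum_{\lambda\ne 0}g(\lambda)^2=\frac{1}{q^4}\sum_{m,m'\ne 0}|\widehat{E}(m)|^2|\widehat{E}(m')|^2\sum_{\lambda\ne 0}|\widehat{E}(\lambda m)|^2|\widehat{E}(\lambda m')|^2,\]
and estimate the inner $\lambda$-sum by pulling out $\max_{\lambda}|\widehat{E}(\lambda m)|^2\le|\widehat{E}(0)|^2=|E|^2$ and applying Plancherel to the remaining factor, so that it is $\le |E|^2\cdot q^2|E|=q^2|E|^3$ uniformly in $m,m'$. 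Combined with $\sum_{m\ne 0}|\widehat{E}(m)|^2\le q^2|E|$ this gives $\sum_{\lambda\ne 0}g(\lambda)^2\le q^2|E|^5$. Summing the three contributions yields $\bigl|L^2(S_E)-|E|^8q^{-3}\bigr|\ll |E|^6+q^2|E|^5$. The only delicate points are the bookkeeping of degenerate tuples and the observation that this crude ``maximum times Plancherel'' estimate on the inner sum is already enough for the stated exponents, so no finer harmonic analysis of the dilation action on lines is required.
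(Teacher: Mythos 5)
Your proposal is correct, and its overall skeleton matches the paper's: both expand the count in the dilation parameter, split off the zero frequency to get $f(\lambda)=\frac{|E|^4}{q^2}+g(\lambda)$ (the paper reaches the same decomposition through $\gamma_\lambda(z)$ and the identity $\widehat{\gamma_\lambda}(\xi)=q^2\widehat{E}(-\xi)\widehat{E}(\lambda\xi)$ rather than directly by orthogonality), bound the cross term by a first-moment/Plancherel estimate, and bound $\sum_{\lambda\ne0}g(\lambda)^2$ by a uniform estimate on $\sum_{\lambda\ne 0}|\widehat{E}(\lambda m)|^2|\widehat{E}(\lambda m')|^2$. The one genuinely different step is that last estimate: the paper proves a lemma $L(\xi_1,\xi_2)=\sum_{\lambda}|\widehat{E}(\lambda\xi_1)|^2|\widehat{E}(\lambda\xi_2)|^2\ll |E|^3/q^6$ (in its normalized convention, i.e.\ $q^2|E|^3$ in yours) via Cauchy--Schwarz, reducing to the fourth moment $\sum_\lambda|\widehat{E}(\lambda m)|^4$, which it evaluates by counting quadruples $(x,y,z,w)\in E^4$ with $x+y-z-w$ on the line orthogonal to $m$. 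You instead pull out $\max_{\xi}|\widehat{E}(\xi)|^2\le|E|^2$ and apply Plancherel over the punctured line $\{\lambda m':\lambda\ne0\}$; since full-space Plancherel gives $q^2|E|$, which numerically coincides with the line bound the paper uses elsewhere, your cruder argument yields the identical exponent with no counting lemma at all, so it is slightly more elementary and loses nothing. You are also more careful than the paper on one bookkeeping point: the identity $L^2(S_E)=\sum_{\lambda\ne0}f(\lambda)^2$ holds only up to the fully degenerate tuples $u_i=v_i$ for all $i$, each of which the right-hand side counts $q-1$ times (for all other tuples $\lambda$ is pinned down uniquely by a nonvacuous relation); you correctly isolate this $\ll q|E|^4$ discrepancy and absorb it into $q^2|E|^5$, whereas the paper asserts the identity without comment. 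The remaining arithmetic checks out: $(q-1)|E|^8/q^4$ produces the main term with error $|E|^8/q^4\le|E|^6$, the cross term is $\le 2|E|^6$, and the second moment is $\le q^2|E|^5$.
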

For the set of scales, it is not hard to show that if $|E|\gg q^{3/2}$, then  $E-E/E-E$ covers the whole field. Roughly speaking, under the condition $|E|\gg q^{3/2}$, one can find a line with at least $q^{1/2}$ points from $E$, then without loss of generality, we assume that line is defined by $y=0$. So the problem is reduced to one dimensional version, namely, $A-A/A-A$ for some $A\subset \mathbb{F}_q$. It is well-known that $A-A/A-A=\mathbb{F}_q$ when $|A|\gg q^{1/2}$, see \cite[Lemma 1]{balog} for example. We leave this to the reader for checking in detail. The exponent $3/2$ is also optimal, for example, take $q=p^2$ and $E=\mathbb{F}_p\times \mathbb{F}_q$, then $\left\vert\frac{E-E}{E-E}\right\vert=p=o(q)$. 

Theorem \ref{thm-scale} infers that $L^2(S_E)=(1+o(1))|E|^8/q^3$ whenever $|E|\gg q^{5/3}$. While it is hard to believe that the exponent $5/3$ is sharp, it cannot be improved to lower than $3/2$. Indeed, take $q=p^2$ and $E=\mathbb{F}_q\times \mathbb{F}_p$, then we can compute that $L^2(S_E)\sim p|E|^6=q^{9+1/2}>q^9=q^{-3}|E|^8$. Thus, it is plausible to make the following conjecture. 

\begin{conjecture}
Let $E$ be a subset of $\mathbb{F}_q^2$. If $|E|\gg q^{\frac{3}{2}+\epsilon}$ for any $\epsilon>0$, then 
\[L^2(S_E)=(1+o(1))\frac{|E|^8}{q^3}.\]
\end{conjecture}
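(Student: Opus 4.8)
The plan is to write $L^2(S_E)=\sum_{\lambda\neq 0} N(\lambda)^2$, where $N(\lambda)$ counts the quadruples $(u_1,v_1,u_2,v_2)\in E^4$ with $u_1-v_1=\lambda(u_2-v_2)$; this is the reading that produces the stated main term, since $\sum_{\lambda\neq 0}(|E|^4/q^2)^2\sim |E|^8/q^3$. Fixing a nontrivial additive character $\chi$ of $\mathbb{F}_q$ and setting $S(m)=\sum_{u\in E}\chi(m\cdot u)$, I would detect the vector equation by the orthogonality relation $q^{-2}\sum_{m}\chi\big(m\cdot(u_1-v_1-\lambda(u_2-v_2))\big)=\mathbf{1}\{u_1-v_1=\lambda(u_2-v_2)\}$, which gives
\[
N(\lambda)=\frac{1}{q^2}\sum_{m\in\mathbb{F}_q^2}|S(m)|^2\,|S(\lambda m)|^2 .
\]
Separating the term $m=0$, where $|S(0)|^2=|E|^2$, yields $N(\lambda)=|E|^4/q^2+R(\lambda)$ with $R(\lambda)=q^{-2}\sum_{m\neq 0}|S(m)|^2|S(\lambda m)|^2\ge 0$. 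Squaring and summing,
\[
L^2(S_E)=(q-1)\frac{|E|^8}{q^4}+\frac{2|E|^4}{q^2}\sum_{\lambda\neq 0}R(\lambda)+\sum_{\lambda\neq 0}R(\lambda)^2 ,
\]
whose first term already supplies $|E|^8/q^3$ up to the admissible error $|E|^8/q^4$.

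The heart of the argument is the \emph{linear} term. First I would reorganize $\sum_{\lambda\neq 0}R(\lambda)=q^{-2}\sum_{\ell}W(\ell)^2$, where $W(\ell)=\sum_{n\in\ell\setminus\{0\}}|S(n)|^2$ and $\ell$ runs over the $q+1$ lines through the origin: these are exactly the orbits of the dilation action $m\mapsto\lambda m$, so for fixed $m\neq 0$ the vector $\lambda m$ sweeps out $\ell_m\setminus\{0\}$. A short character computation identifies $W(\ell)=q\,\nu(\ell^{\perp})-|E|^2$, where $\nu(L)=\#\{(u,u')\in E^2:u-u'\in L\}$, and since $L\mapsto L^{\perp}$ is an involution on lines through the origin one gets $\sum_{\ell}W(\ell)^2=\sum_L(q\nu(L)-|E|^2)^2$. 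Expanding this and writing $\nu(L)=\sum_{w\in L}r(w)$ with $r(w)=\#\{(u,u'):u-u'=w\}$, the genuinely off-diagonal part $\sum_{w,w'\neq 0,\ \mathrm{collinear}}r(w)r(w')$ is precisely the direction count $L^2(D_E)$. Invoking Theorem \ref{thm:direction} in the form $L^2(D_E)=|E|^4/q+O(q^2|E|^2)$ and tracking the remaining elementary sums, the leading contributions of order $q|E|^4$ and $q^2|E|^3$ cancel, leaving $\sum_{\ell}W(\ell)^2=|E|^4+O(q^4|E|^2)$, hence $\sum_{\lambda\neq 0}R(\lambda)=|E|^4/q^2+O(q^2|E|^2)$.

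For the quadratic term I would argue crudely: $R(\lambda)\le N(\lambda)\le |E|^3$, since fixing $u_1,u_2,v_2\in E$ determines $v_1$, so by nonnegativity
\[
\sum_{\lambda\neq 0}R(\lambda)^2\le\Big(\max_{\lambda}R(\lambda)\Big)\sum_{\lambda\neq 0}R(\lambda)\le |E|^3\Big(\frac{|E|^4}{q^2}+O(q^2|E|^2)\Big)\ll q^2|E|^5 ,
\]
where $|E|\le q^2$ absorbs $|E|^7/q^2$ into $q^2|E|^5$. Substituting the two estimates into the expansion, the cross term becomes $2|E|^8/q^4+O(|E|^6)$, and assembling everything gives $L^2(S_E)=|E|^8/q^3+O(|E|^6+q^2|E|^5)$, the stray powers $|E|^8/q^4$ being again swallowed by $q^2|E|^5$ since $|E|\le q^2$. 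I expect the main obstacle to be the linear term: one must perform the reorganization over dilation orbits, correctly identify the off-diagonal sum with $L^2(D_E)$, and verify the cancellation of the $q|E|^4$ and $q^2|E|^3$ contributions, since any looseness there would immediately overwhelm the target error $|E|^6+q^2|E|^5$.
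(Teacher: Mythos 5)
There is a genuine gap, and it is located at the very last step: what you have proved is (a slightly refined form of) Theorem \ref{thm-scale}, not the statement in question. The statement is one of the paper's \emph{conjectures}, explicitly left open; the paper's own Theorem \ref{thm-scale} gives exactly your final bound $L^2(S_E)=|E|^8/q^3+O(|E|^6+q^2|E|^5)$, and the paper points out that this yields the asymptotic $L^2(S_E)=(1+o(1))|E|^8/q^3$ only for $|E|\gg q^{5/3}$. The conjecture asks for the threshold $q^{3/2+\epsilon}$, and your error term does not reach it: at $|E|=q^{3/2+\epsilon}$ the main term is $|E|^8/q^3=q^{9+8\epsilon}$ while $q^2|E|^5=q^{19/2+5\epsilon}$, so for every $\epsilon<1/6$ the error $q^2|E|^5$ \emph{dominates} the main term throughout the range $q^{3/2+\epsilon}\ll |E|\ll q^{5/3}$. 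The term $|E|^6$ is harmless in this range (it needs only $|E|\gg q^{3/2+\epsilon}$); the obstruction is entirely the quadratic term. Your bound
\[
\sum_{\lambda\ne 0}R(\lambda)^2\le \Bigl(\max_{\lambda}R(\lambda)\Bigr)\sum_{\lambda\ne 0}R(\lambda)\le |E|^3\cdot\Bigl(\frac{|E|^4}{q^2}+O(q^2|E|^2)\Bigr)
\]
uses the trivial estimate $\max_\lambda R(\lambda)\le |E|^3$, and this is precisely where the paper's proof also loses (there, via Cauchy--Schwarz and the fourth-moment lemma $\sum_\lambda|\widehat{E}(\lambda m)|^4\ll |E|^3/q^6$, which rests on the same trivial line count $\ll q|E|^3$). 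Proving the conjecture requires showing $\sum_{\lambda\ne 0}R(\lambda)^2=o(|E|^8/q^3)$ down to $|E|\sim q^{3/2+\epsilon}$, i.e.\ a genuinely nontrivial fourth-moment estimate beating the pointwise bound $R(\lambda)\le|E|^3$; no such input appears in your argument, and none is known --- that is the open content of the conjecture.

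For what it is worth, the parts you did prove check out and are mildly sharper than the paper's treatment of the corresponding pieces. Your identity $N(\lambda)=q^{-2}\sum_m|S(m)|^2|S(\lambda m)|^2$ matches the paper's computation $\widehat{\gamma_\lambda}(\xi)=q^2\widehat{E}(-\xi)\widehat{E}(\lambda\xi)$, and your evaluation of the linear term is correct: expanding $\sum_L(q\nu(L)-|E|^2)^2$, identifying the off-diagonal collinear sum with $L^2(D_E)$, and inserting Theorem \ref{thm:direction} indeed gives $\sum_\ell W(\ell)^2=q^2L^2(D_E)-(q-1)|E|^4+q^2(q-1)|E|^2=|E|^4+O(q^4|E|^2)$, so $\sum_{\lambda\ne0}R(\lambda)=|E|^4/q^2+O(q^2|E|^2)$, where the paper contents itself with the upper bound $\sum_\lambda\sum_{\xi\ne0}|\widehat{\gamma_\lambda}(\xi)|^2\ll|E|^2$. (One could reach the same asymptotic more directly from $\sum_{\lambda\ne0}N(\lambda)=L^2(D_E)+(q-1)|E|^2$, avoiding the orbit decomposition.) But this refinement only affects the cross term, which was already $O(|E|^6)$ in the paper; it does not touch the dominant $q^2|E|^5$ error, so the conjecture remains unproved by your proposal.
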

The final remark is that all results in this paper can be extended to higher dimensions in the same way, but the proofs will become much complicated, for instance, in the proofs of Theorems \ref{thm1} and \ref{thm-quotient}, one has to count the number of pairs of zero distances, which can be done by using a number of results from Restriction theory \cite{IKPS}. Therefore, to keep this paper simple, we only present results in two dimensions.
\section{Notations from discrete Fourier analysis}
In this section, we recall some notations and results from discrete Fourier analysis which will be needed for our coming proofs in next sections. 

Given a complex function $f\colon \mathbb{F}_q^d\to \mathbb{C}$, the Fourier transform of $f$ is defined by 
\[\widehat{f}(\xi)=\frac{1}{q^d}\sum_{x\in \mathbb{F}_q^d}f(x)\chi(-x\cdot \xi),\]
where $\chi$ is a fixed non-trivial additive character of $\mathbb{F}_q$. 

Using the orthogonality property of $\chi$, i.e. 
\[\sum_{x\in \mathbb{F}_q^d}\chi(x\cdot \xi)=\begin{cases}0&\mbox{if}~\xi\ne 0,\\
q^d&\mbox{if}~\xi=0,\end{cases}\]
one can prove that 
\[f(x)=\sum_{\xi\in \mathbb{F}_q^d}\widehat{f}(\xi)\chi(\xi\cdot x).\]
In this setting, the Plancherel formula reads as 
\[\sum_{\xi\in \mathbb{F}_q^d}|\widehat{f}(\xi)|^2=q^{-d}\sum_{x\in \mathbb{F}_q^d}|f(x)|^2.\]
When $f$ is the indicator of a given set $E\subset \mathbb{F}_q^d$, one has 
\[\sum_{\xi\in \mathbb{F}_q^d}|\widehat{E}(\xi)|^2=q^{-d}|E|.\]
The following lemma is known as the finite field analog of the spherical average in the classical Falconer distance problem \cite[Chapter 3]{MAT}.
\begin{lemma}\label{spherical-average}
\[\max_{t\in \mathbb{F}_q\setminus\{0\}}\sum_{||\eta||=t}|\widehat{E}(\eta)|^2\ll \frac{|E|^{3/2}}{q^3}.\]
\end{lemma}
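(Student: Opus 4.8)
The plan is to deduce the estimate from the $L^2$ restriction (Stein--Tomas) estimate for the sphere $S_t=\{\eta\in\mathbb{F}_q^2:||\eta||=t\}$, which for $t\neq 0$ is a nondegenerate conic. Write $R(t):=\sum_{||\eta||=t}|\widehat{E}(\eta)|^2$, so that the goal is $\max_{t\neq 0}R(t)\ll|E|^{3/2}q^{-3}$. First I would record the associated extension estimate: for every $f\colon S_t\to\mathbb{C}$, setting $\mathcal{E}f(y):=\sum_{\eta\in S_t}f(\eta)\chi(y\cdot\eta)$, I claim
\[
\|\mathcal{E}f\|_{L^4(\mathbb{F}_q^2)}\ll q^{1/2}\,\|f\|_{L^2(S_t)},\qquad \|f\|_{L^2(S_t)}^2=\sum_{\eta\in S_t}|f(\eta)|^2,
\]
with an implied constant independent of $t$ and $E$. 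Granting this, the lemma follows quickly. Taking $f=\overline{\widehat{E}}\,\big|_{S_t}$ gives $\|f\|_{L^2(S_t)}^2=R(t)$, while unfolding the definition of $\widehat{E}$ yields $R(t)=q^{-2}\sum_{x\in E}\mathcal{E}f(-x)$. Hölder's inequality with exponents $4$ and $4/3$, the identity $\|\mathbf{1}_E\|_{L^{4/3}}=|E|^{3/4}$, and the extension bound then give $R(t)\ll q^{-2}\cdot q^{1/2}R(t)^{1/2}\cdot|E|^{3/4}=q^{-3/2}R(t)^{1/2}|E|^{3/4}$; cancelling one factor of $R(t)^{1/2}$ produces $R(t)\ll q^{-3}|E|^{3/2}$, uniformly in $t\neq 0$.

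The heart of the matter is the extension estimate, which I would prove by expanding the fourth power and counting additive quadruples on $S_t$. Orthogonality in $y$ gives
\[
\|\mathcal{E}f\|_{L^4}^4=q^2\sum_{\substack{\eta_1,\eta_2,\eta_3,\eta_4\in S_t\\ \eta_1+\eta_2=\eta_3+\eta_4}}f(\eta_1)f(\eta_2)\overline{f(\eta_3)f(\eta_4)},
\]
so it suffices to bound the weighted quadruple sum by $\ll\|f\|_{L^2(S_t)}^4$. I would split according to the value $v=\eta_1+\eta_2$. The key geometric input is that for $v\neq 0$ there are at most two pairs $(\eta_1,\eta_2)\in S_t^2$ with $\eta_1+\eta_2=v$: the system $||\eta||=||v-\eta||=t$ forces $v\cdot\eta=||v||/2$, a line, which meets the conic $S_t$ in at most two points since $x^2+y^2-t$ contains no line when $t\neq 0$. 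Hence, by the Cauchy--Schwarz inequality applied to these at most two terms, $\left(\sum_{\eta_1+\eta_2=v}|f(\eta_1)f(\eta_2)|\right)^2\le 2\sum_{\eta_1+\eta_2=v}|f(\eta_1)|^2|f(\eta_2)|^2$, and summing over $v\neq 0$ gives a contribution $\le 2\|f\|_{L^2(S_t)}^4$. The degenerate value $v=0$, where $\eta_2=-\eta_1$ and $\eta_4=-\eta_3$, contributes $\left(\sum_{\eta\in S_t}|f(\eta)f(-\eta)|\right)^2\le\|f\|_{L^2(S_t)}^4$ by Cauchy--Schwarz. Adding the two pieces yields the claimed $L^4$ bound.

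The main obstacle is precisely this additive-quadruple count: once one knows the circle $S_t$ carries $O(q^2)$ additive quadruples, the remainder is routine Cauchy--Schwarz and Hölder bookkeeping. The two delicate points are the isolation of the degenerate value $v=0$, where the number of representations jumps from $2$ to $|S_t|\approx q$, and the verification that $S_t$ contains no line, which is exactly where the hypothesis $t\neq 0$ enters (for $t=0$ with $-1$ a square the ``sphere'' splits into two lines and the estimate fails). One should also note that all constants are uniform in $t$, since the intersection bound is purely geometric. A more computational alternative would evaluate $\sum_{||\eta||=t}\chi(m\cdot\eta)$ explicitly as a quadratic Gauss factor times the Kloosterman sum $K(-t,-||m||/4)$ and insert the Weil bound $|K|\le 2\sqrt q$; but summing this termwise over the $|E|^2$ differences $m=y-x$ loses a factor of $q^{1/2}$ and only reaches the range $|E|\ll q$. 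Exploiting the cancellation coming from the curvature of $S_t$, as above, is what recovers the sharp exponent $3/2$.
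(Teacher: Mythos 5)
Your proposal is correct, and it is worth noting that the paper itself does not prove this lemma at all: it simply cites \cite[Lemma 4.4]{S4}, so your argument is a genuine self-contained substitute. The route you take --- dualizing the spherical average against the extension operator, then proving the $L^2\to L^4$ extension estimate for the circle by counting weighted additive quadruples --- is essentially the standard Mockenhaupt--Tao/Stein--Tomas mechanism that underlies the cited result (the source of \cite{S4} reaches the same $R^*(2\to 4)$ input via explicit Gauss/Sali\'e sum evaluations, whereas your quadruple count is purely geometric and arguably cleaner). I verified the details: the orthogonality identity $\|\mathcal{E}f\|_{L^4}^4=q^2\sum_{v}|F(v)|^2$ with $F(v)=\sum_{\eta_1+\eta_2=v}f(\eta_1)f(\eta_2)$ is right; for $v\ne 0$ the system $\|\eta\|=\|v-\eta\|=t$ does force $\eta$ onto the line $2v\cdot\eta=\|v\|$, and your verification that $S_t$ contains no line for $t\ne 0$ goes through in both cases $q\equiv 1,3 \bmod 4$ (if the direction $b$ of a contained line were isotropic, nondegeneracy of the form forces the base point into $\langle b\rangle$, giving $\|a\|=0\ne t$); the isolation of $v=0$, where $F(0)=\sum_{\eta\in S_t}f(\eta)f(-\eta)$ is a single sum bounded by $\|f\|_{L^2(S_t)}^2$ since $\eta\mapsto-\eta$ preserves $S_t$, is exactly the point a careless adaptation of the parabola argument would miss, and you handle it correctly. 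The duality bookkeeping also checks out: with the paper's normalization $\widehat{E}(\eta)=q^{-2}\sum_{x\in E}\chi(-x\cdot\eta)$, taking $f=\overline{\widehat{E}}\,|_{S_t}$ gives $R(t)=q^{-2}\sum_{x\in E}\mathcal{E}f(-x)$, and H\"older with exponents $(4/3,4)$ plus the extension bound yields $R(t)\ll q^{-3/2}|E|^{3/4}R(t)^{1/2}$, hence $R(t)\ll q^{-3}|E|^{3/2}$ uniformly in $t\ne 0$, with the constant $3^{1/4}$ from the quadruple count independent of $t$ and $E$ as you claim. Your closing remark is also accurate: the termwise Weil/Kloosterman bound gives only $|E|q^{-3}+|E|^2q^{-7/2}$, which matches $|E|^{3/2}q^{-3}$ only for $|E|\ll q$. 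The single caveat is that dividing by $2$ in $v\cdot\eta=\|v\|/2$ uses odd characteristic; this is harmless here, since the whole paper implicitly assumes $q$ odd (it works with squares, $O(d)$, and $q\equiv 3\bmod 4$), but you should state it.
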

A proof of this lemma can be found in \cite[Lemma 4.4]{S4}.
\section{Proof of Theorem \ref{thm1}}
It is well-known that for $u, v, x, y\in \mathbb{F}_q^d$, if $||u-v||=||x-y||$, then there exist $\theta\in O(d)$ and $z\in \mathbb{F}_q^d$ such that 
\[\theta u+z=x, ~\theta v+z=y.\]
We now show that a similar statement holds for the case of product of distance sets.
\begin{lemma}
Set
\[G_1 = \bigg\{ diag(r_1\theta_1, r_2\theta_2)\in M_{2d\times 2d}\colon r_1\cdot r_2=1, \theta_1, \theta_2\in O(d)\bigg\},\]
and $G=G_1\times \mathbb{F}_q^{2d}$. For $x_1, x_2, x_3, x_4, y_1, y_2, y_3, y_4\in \mathbb{F}_q^d$, if 
\[||x_1-y_1||\cdot ||x_2-y_2||=||x_3-y_3||\cdot ||x_4-y_4||\ne 0,~ \frac{||x_4-y_4||}{||x_2-y_2||}\in (\mathbb{F}_q)^2,\]
then there exists $g\in G$ such that 
\[g(x_1, x_2)^T=(x_3, x_4)^T, ~g(y_1, y_2)^T=(y_3, y_4)^T.\]
\end{lemma}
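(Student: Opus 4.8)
The plan is to reduce the existence of $g$ to two independent ``scaled rotation'' problems in $\mathbb{F}_q^d$, one for each diagonal block, and then to solve each block by combining the scale constraint $r_1r_2=1$ with the transitivity fact recalled at the beginning of this section. First I would unwind what membership in $G$ means for a prospective element $g=(\mathrm{diag}(r_1\theta_1,r_2\theta_2),(z_1,z_2))$ with $r_1r_2=1$, $\theta_i\in O(d)$, $z_i\in\mathbb{F}_q^d$. The four vector equations $g(x_1,x_2)^T=(x_3,x_4)^T$ and $g(y_1,y_2)^T=(y_3,y_4)^T$ split block-wise, and subtracting the $x$- and $y$-equations eliminates the translations $z_i$, leaving
\[
r_1\theta_1(x_1-y_1)=x_3-y_3,\qquad r_2\theta_2(x_2-y_2)=x_4-y_4.
\]
Once admissible $r_i,\theta_i$ are found, the translations are then forced, e.g.\ $z_1=x_3-r_1\theta_1 x_1$ and $z_2=x_4-r_2\theta_2 x_2$. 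So it suffices to produce $r_1,r_2\in\mathbb{F}_q$ with $r_1r_2=1$ and $\theta_1,\theta_2\in O(d)$ satisfying the two displayed identities. Note all four norms $||x_i-y_i||$ are nonzero, since the product $||x_1-y_1||\,||x_2-y_2||$ is nonzero by hypothesis.

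Next I would pin down the scales. Because $\theta_i$ preserves $||\cdot||$, applying the norm to the two identities forces $r_1^2=||x_3-y_3||/||x_1-y_1||$ and $r_2^2=||x_4-y_4||/||x_2-y_2||$. The square hypothesis $||x_4-y_4||/||x_2-y_2||\in(\mathbb{F}_q)^2$ guarantees a solution $r_2\in\mathbb{F}_q$ of the second equation; I then set $r_1:=r_2^{-1}$, so that $r_1r_2=1$ holds automatically. The product hypothesis $||x_1-y_1||\,||x_2-y_2||=||x_3-y_3||\,||x_4-y_4||$ is exactly what makes this choice consistent, since it rearranges to $||x_3-y_3||/||x_1-y_1||=||x_2-y_2||/||x_4-y_4||=r_2^{-2}=r_1^2$, so $r_1$ is a legitimate square root for the first block as well. (In particular the first ratio is automatically a square, being the inverse of a nonzero square, so no separate hypothesis on it is needed.)

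Finally I would produce the rotations. By the computation of $r_1^2,r_2^2$ above, the vectors $x_1-y_1$ and $r_1^{-1}(x_3-y_3)$ have the same nonzero norm $||x_1-y_1||$, and likewise $x_2-y_2$ and $r_2^{-1}(x_4-y_4)$ have common norm $||x_2-y_2||$. The fact recalled at the start of this section---that $O(d)$ together with translations acts transitively on pairs of points at a fixed distance, equivalently that $O(d)$ carries any vector to any other vector of equal nonzero norm---then yields $\theta_1\in O(d)$ with $\theta_1(x_1-y_1)=r_1^{-1}(x_3-y_3)$ and $\theta_2\in O(d)$ with $\theta_2(x_2-y_2)=r_2^{-1}(x_4-y_4)$. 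Multiplying through by $r_1$ and $r_2$ recovers the two displayed identities, and assembling $r_1,r_2,\theta_1,\theta_2$ with the forced translations gives the desired $g\in G$.

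The hard part is not the algebra but the interplay between the group constraint $r_1r_2=1$ and the requirement of staying inside $\mathbb{F}_q$: the square condition is what lets a scale exist as an honest field element, while the product condition is precisely what reconciles the two block scales with $r_1r_2=1$. This is the reason both hypotheses appear in the statement, and it is where I would expect any genuine difficulty to lie; the transitivity of $O(d)$ on a fixed nonzero sphere is invoked as a black box, and everything else is routine bookkeeping.
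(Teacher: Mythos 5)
Your proposal is correct and takes essentially the same route as the paper: the paper likewise sets $r=\|x_4-y_4\|/\|x_2-y_2\|=t^2$, uses the product identity to rewrite the first block as $\|(1/t)x_1-(1/t)y_1\|=\|x_3-y_3\|$, applies the transitivity of $O(d)$ together with translations to each block separately, and assembles $g$ with scales $1/t$ and $t$ (your $r_1=r_2^{-1}$), so that $r_1r_2=1$ holds automatically. Your write-up only differs in making the elimination of the translations and the nonvanishing of all four norms explicit, which the paper leaves implicit.
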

\begin{proof}
If 
\[||x_1-y_1||\cdot ||x_2-y_2||=||x_3-y_3||\cdot ||x_4-y_4||\ne 0,\]
then 
\[\frac{||x_1-y_1||}{||x_3-y_3||}=\frac{||x_4-y_4||}{||x_2-y_2||}.\]
Set $r=\frac{||x_4-y_4||}{||x_2-y_2||}$. From our assumptions, we know that $r$ is a square, say, say, $r=t^2$ for some $t\ne 0$. This infers that
\[||(1/t)x_1-(1/t)y_1||=||x_3-y_3||.\]
Hence, we can find $\theta_1\in O(d)$ and $z_1\in \mathbb{F}_q^d$ such that 
\[(1/t)\theta_1x_1+z_1=x_3, ~(1/t)\theta_1y_1+z_1=y_3.\]
Similarly, from the fact that
\[||x_4-y_4||=||tx_2-ty_2||,\]
we can find $\theta_2\in O(d)$ and $z_2\in \mathbb{F}_q^d$ such that
\[ t\theta_2 x_2+z_2=x_4, ~t\theta_2 y_2+z_2=y_4.\]
In other words, $g=(1/t\theta_1, t\theta_2)\times (z_1, z_2)\in G$ is the element we want to find.
\end{proof}
\begin{proof}[Proof of Theorem \ref{thm1}]
Let $$A=\{(a, b)\in E\times E\colon ||a-b||\in (\mathbb{F}_q)^2\setminus \{0\}\},$$ and $$B=E\times E\setminus \left(A\cup\{(a, b)\in E\times E\colon ||a-b||=0\}\right).$$  

Let $N_0(E)$ be the number of pairs $(a, b)\in E\times E$ such that $||a-b||=0$. We know from \cite[Proposition 2.4]{KoSu} that 
\[N_0(E)\ll \frac{|E|^2}{q}+q|E|\ll |E|^2,\]
when $|E|\gg q$. Thus, either $|A|$ or $|B|$ is bounded from below by $|E|^2/2$. Without loss of generality, we assume that $|B|\gg |E|^2/2$. For any $\lambda \in (\mathbb{F}_q)^2\setminus \{0\}$, define 
\[\nu(\lambda):=\#\{(x_1, y_1, x_2, y_2)\in B^2\colon ||x_1-y_1||\cdot ||x_2-y_2||=\lambda \}.\]
We have 
\[\sum_{\lambda\in \left(\Delta(E)\cdot \Delta(E)\right)\cap (\mathbb{F}_q)^2\setminus \{0\}}\nu(\lambda)\gg |E|^4.\]
By the Cauchy-Schwarz inequality, one has 
\[\sum_{\lambda\in \Delta(E)\cap (\mathbb{F}_q)^2\setminus \{0\}}\nu(\lambda)\ll |\Delta(E)|^{1/2}\cdot \left(\sum_{\lambda\in \mathbb{F}_q}\nu(\lambda)^2\right)^{1/2}.\]
In the next step, we are going to bound $\sum_{\lambda\in \mathbb{F}_q}\nu(\lambda)^2$
from above. For $\theta=diag(r_1\theta_1, r_2\theta_2)\in G_1$ and $z=(z_1, z_2)\in \mathbb{F}_q^2\times \mathbb{F}_q^2$, we define $\mu_\theta(z)$ to be the number of tuples $(x_1, x_2, x_3, x_4)\in E^4$ such that 
\[\theta(x_1, x_2)^T+(z_1, z_2)^T=(x_3, x_4)^T. ~\]
Then we observe that 
\[\sum_{\lambda}\nu(\lambda)^2\le \sum_{\theta\in G_1, z\in \mathbb{F}_q^4}\mu_\theta(z)^2.\]
It suffices to prove that 
\[\sum_{\theta\in G_1, z\in \mathbb{F}_q^4}\mu_\theta(z)^2\ll \frac{|E|^8}{q},\]
whenever $|E|\gg q^{8/7}$.  

Set $F=E\times E$. One has 
\begin{align*}\mu_\theta(z)&=\sum_{x_1, x_2\in \mathbb{F}_q^2}F(x_1, x_2)F\left(\theta(x_1, x_2)^T+z\right)\\
&=\sum_{x_1, x_2\in \mathbb{F}_q^2}F(x_1, x_2)\sum_{m\in \mathbb{F}_q^4}\widehat{F}(m)\chi\left(m\cdot (\theta(x_1, x_2)^T+z)\right)\\
&=q^4\sum_{m\in\mathbb{F}_q^4}\widehat{F}(m)\widehat{F}(-\theta^Tm)\chi(m\cdot z).
\end{align*}
Hence, 
\[\widehat{\mu_\theta}(\xi)=q^4\widehat{F}(-\xi)\widehat{F}(\theta^T\xi).\]
Therefore, for a fixed $\theta\in G_1$,
\[\sum_{\xi\in \mathbb{F}_q^4}|\widehat{\mu_\theta}(\xi)|^2=\frac{|F|^4}{q^8}+\sum_{\xi\ne 0}|\widehat{\mu_\theta}(\xi)|^2.\]
By writing $\xi=(\xi_1, \xi_2)\in \mathbb{F}_q^2\times \mathbb{F}_q^2$ and recall that $F=E\times E$, we have 
\begin{align*}
    &\sum_{\theta\in G_1,~ \xi\in \mathbb{F}_q^4\setminus \{0\}}|\widehat{\mu_\theta}(\xi)|^2=q^8\sum_{\theta, \xi\ne 0}|\widehat{F}(-\xi)|^2|\widehat{F}(\theta^T\xi)|^2\\
    &=q^8\sum_{\theta}\sum_{\xi_1=0, \xi_2\ne 0}+q^8\sum_{\theta}\sum_{\xi\ne 0, \xi_2=0}+q^8\sum_{\theta}\sum_{\xi_1\ne 0, \xi_2\ne 0}=:I+II+III.
\end{align*}
For $I$, by Plancherel, one has 
\begin{align*}
    I&=q^8\sum_{\theta_1, \theta_2\in O(2)}~\sum_{r_1, r_2:~ r_1\cdot r_2=1}~\sum_{\xi_1=0, \xi_2\ne 0}|\widehat{E}(-\xi_1)|^2|\widehat{E}(-\xi_2)|^2|\widehat{E}(r_1\theta_1^T\xi_1)|^2|\widehat{E}(r_2\theta_2^T\xi_2)|^2\\
    &=q|E|^4\sum_{\theta_2}\sum_{r_2\ne 0}\sum_{\xi_2\ne 0}|\widehat{E}(-\xi_2)|^2|\widehat{E}(r_2\theta_2^T\xi_2)|^2\\
    &=q|E|^4\sum_{\xi_2\ne 0}|\widehat{E}(-\xi_2)|^2\sum_{r\ne 0}\sum_{||\eta||=||\xi_2||}|\widehat{E}(r\eta)|^2\\
    &\ll q^{-3}|E|^6.
\end{align*}
Similarly, $II\ll q^{-3}|E|^6.$
For $III$, we proceed as follows: 
\begin{align*}
    III&=q^8\sum_{\theta}\sum_{r_1, r_2: r_1\cdot r_2=1}\sum_{\xi_1\ne 0, \xi_2\ne 0}|\widehat{E}(-\xi_1)|^2|\widehat{E}(-\xi_2)|^2|\widehat{E}(r_1\theta_1^T\xi_1)|^2|\widehat{E}(r_2\theta_2^T\xi_2)|^2\\
    &=q^8\sum_{\theta_2}\sum_{\xi_2\ne0}\sum_{r_2\ne 0}|\widehat{E}(\xi_2)|^2|\widehat{E}(r_2\theta_2^T\xi_2)|^2\sum_{\xi_1\ne 0}|\widehat{E}(\xi_1)|^2\sum_{\eta: ||\eta||=\frac{||\xi_1||}{r_2^2}}|\widehat{E}(\eta)|^2.
\end{align*}
This implies
\[III\ll q^8\left(\sum_{\xi_1}|\widehat{E}(\xi_1)|^2\right)^3\cdot \left(\max_{t\in \mathbb{F}_q\setminus\{0\}}\sum_{||\eta||=t}|\widehat{E}(\eta)|^2\right).\]
Using Lemma \ref{spherical-average}, we obtain
\[III\ll q^{-1}|E|^{9/2}.\]
In other words, 
\[\sum_{\theta, \xi}|\widehat{\mu}_\theta(\xi)|^2\ll \frac{|E|^8}{q^5}+\frac{|E|^6}{q^3}+\frac{|E|^{9/2}}{q}.\]
This infers 
\[\sum_{\theta\in G_1, z\in \mathbb{F}_q^4}\mu_\theta(z)^2\ll \frac{|E|^8}{q}+q|E|^6+q^3|E|^{9/2}\ll \frac{|E|^8}{q},\]
whenever $|E|\gg q^{8/7}$.
\end{proof}

\section{Proof of Theorem \ref{thm-quotient}}
We note that $r=0$ is trivially in $\Delta(E)/\Delta(E).$ For $\theta\in O(2)$ and $z\in \mathbb{F}_q^2$, assume $r$ is a nonzero square, we define 
\[\eta_{\theta}(z):=\#\{(u, v)\in E\times E\colon u-\sqrt{r}\theta v=z\}.\]
We first observe that
\[\sum_{\theta\in O(2), z\in \mathbb{F}_q^2}\eta_\theta(z)=|E|^2\cdot |O(2)|.\]
So by the H\"older inequality, one has 
\[\sum_{\theta, z}\eta_{\theta}(z)^{2}\ge \frac{|E|^{4}|O(2)|}{q^{2}}.\]
On the other hand, the sum $\sum_{\theta, z}\eta_{\theta}(z)^{2}$ counts the number of tuples $(a, b, c, d)\in E^4$ such that $a-\sqrt{r}\theta c=z$ and $b-\sqrt{r}\theta d=z$, so $||a-b||=r||c-d||$. For such a tuple, we have $r\in \Delta(E)/\Delta(E)$, unless $||a-b||=||c-d||=0$.

For each tuple $(a, b, c, d)\in E^4$ such that $||a-b||=r||c-d||$, we have two possibilities.

If $a-b\ne 0$, then this tuple is counted $\sim |O(1)|\sim 1$ times in the sum $\sum_{\theta, z}\eta_{\theta}(z)^2$, since the size of the stabilizer of each non-zero element is $\sim |O(1)|$. 

If $a=b$, then $c=d$. Moreover, the number of tuples $(a, b, c, d)\in E^4$ with $a=b$ and $c=d$ in the sum $\sum_{\theta, z}\eta_{\theta}(z)^2$ is equal to $\sum_{\theta, z}\eta_{\theta}(z)=|E|^2|O(2)|$. 

From these observations, the number of tuples $(a, b, c, d)\in E^4$ such that $||a-b||=r||c-d||\ne 0$ is at least
\[\frac{1}{|O(1)|}\left(\sum_{\theta, z}\eta_{\theta}(z)^2-|E|^2|O(2)|-|O(1)|N_0\right),\]
where $N_0$ is the number of tuples with $||a-b||=r||c-d||=0$, $a\ne b$, and $c\ne d$.

Since $q\equiv 3\mod 4$, it is clear that $N_0=0$. Thus, 
\[\frac{1}{|O(1)|}\left(\sum_{\theta, z}\eta_{\theta}(z)^2-|E|^2|O(2)|-|O(1)|N_0\right)\gg \frac{|E|^4}{q},\]
whenever $|E|\gg q$.

\section{Proof of Theorem \ref{thm:direction}}
Define the group $G=\mathbb{F}_q\times \mathbb{F}_q^2$ with the operator
\[(\lambda, z_1)\ast (\beta, z_2)=(\lambda\cdot \beta, z_1+z_2).\]
For a fixed $\lambda$, we define the counting function \[\gamma_{\lambda}(z):=\#\{(a, b)\in E^2\colon \lambda a+z=b\}.\]
By a direct computation, the tuple $(u, v, x, y)$ satisfies (\ref{direction-vv}) if and only if there exist $\lambda\ne 0$ and $z\in \mathbb{F}_q^2$ such that $\lambda u+z=x$ and $\lambda v+z=y$. Thus, 
\[L^2(S_E)=\sum_{\lambda, z}\gamma_{\lambda}(z)^2.\]
For a fixed $\lambda$, we have 
\begin{align*}
    \sum_{z\in \mathbb{F}_q^2}\gamma_\lambda(z)^2&=q^2\sum_{\xi\in \mathbb{F}_q^2}|\widehat{\gamma_{\lambda}(\xi)}|^2=\frac{|\sum_{z}\gamma_\lambda(z)|^2}{q^2}+q^2\sum_{\xi\ne 0}|\widehat{\gamma_{\lambda}(\xi)}|^2\\
    &=\frac{|E|^4}{q^2}+q^2\sum_{\xi\ne 0}|\widehat{\gamma_{\lambda}(\xi)}|^2.
\end{align*}
On the other hand, 
\begin{align*}
    \gamma_\lambda(z)&=\sum_{u}E(u)E(\lambda u+z)=\sum_{u}E(u)\sum_{\xi}\widehat{E}(\xi)\chi(\xi\cdot(\lambda u+z))\\
    &=\sum_{\xi}\widehat{E}(\xi)\chi(\xi\cdot z)\sum_{u}E(u)\chi(\lambda \xi\cdot u)\\
    &=q^2\sum_{\xi}\widehat{E}(\xi)\widehat{E}(-\lambda \xi)\chi(\xi\cdot z).
\end{align*}
Hence, 
\[\widehat{\gamma_{\lambda}(\xi)}=q^2\widehat{E}(-\xi)\widehat{E}(\lambda \xi).\]
So, 
\begin{align*}
    \sum_{\xi\ne 0}|\widehat{\gamma_{\lambda}(\xi)}|^2=q^4\sum_{\xi\ne 0} |\widehat{E}(-\xi)|^2|\widehat{E}(\lambda \xi)|^2.
\end{align*}
Hence, 
\[\sum_{\lambda\ne 0} \sum_{\xi\ne 0}|\widehat{\gamma_{\lambda}(\xi)}|^2=q^4\sum_{\xi\ne 0}|\widehat{E}(\xi)|^2\sum_{m\in <\xi>\setminus \{0\}}|\widehat{E}(m)|^2,\]
where $<\xi>$ is the line generated by $\xi$.

To proceed further, we set $L:=<\xi>$ and $T_L(E):=\sum_{m\in L}|\widehat{E}(m)|^2$. 
It follows that
\begin{align*}
    T_L(E)&=q^{-4} \sum_{m\in L} \sum_{x,y\in E} \chi(m\cdot (x-y))=q^{-4} \sum_{x,y\in E} \sum_{t\in \mathbb F_q} \chi(t (\xi\cdot (x-y)))=q^{-4}(I+II),
\end{align*}
where
\[I=\sum_{x, y\in E, ~\xi\cdot (x-y)=0}\sum_{t\in \mathbb{F}_q}\chi(t (\xi\cdot (x-y)))=q\sum_{x, y\in E, ~\xi\cdot (x-y)=0} 1,\]
and 
\[II=\sum_{x, y\in E, ~\xi\cdot (x-y)\ne 0}\sum_{t\in \mathbb{F}_q}\chi(t (\xi\cdot (x-y)))=0.\]
We note that the equation $\xi\cdot (x-y)=0$ with $x, y\in E$ means that $x-y$ lies on the line with the normal vector $\xi$ and passing through the origin. Using the fact that each line in $\mathbb{F}_q^2$ contains exactly $q$ points, we have $I\le q^2|E|$. Thus, using Plancherel formula, we obtain
\[\sum_{\lambda\ne 0} \sum_{\xi\ne 0}|\widehat{\gamma_{\lambda}(\xi)}|^2\le |E|^2.\]
In other words, 
\[\left\vert \sum_{\lambda, z}\gamma_\lambda(z)^2- \frac{|E|^4}{q}\right\vert \ll q^2|E|^2.\]
This completes the proof of the theorem.
\section{Proof of Theorem \ref{thm-scale}}
Using the notations as in the previous section, we observe that 
\[L^2(S_E)=\sum_{\lambda\ne 0}\left(\sum_{z}\gamma_\lambda(z)^2\right)^2.\]

As in the previous section, we know that 
\[\sum_{z\in \mathbb{F}_q^2}\gamma_\lambda(z)^2=\frac{|E|^4}{q^2}+q^2\sum_{\xi\ne 0}|\widehat{\gamma_\lambda}(\xi)|^2,\]
and 
\[\sum_{\lambda}\sum_{\xi\ne 0}|\widehat{\gamma_\lambda}(\xi)|^2\ll |E|^2.\]
Thus, 
\[\left\vert \sum_{\lambda\ne 0}\left(\sum_{z}\gamma_\lambda(z)^2\right)^2- \frac{|E|^8}{q^3}\right\vert \ll |E|^6+q^4\sum_{\lambda}\left(\sum_{\xi\ne 0}|\widehat{\gamma_\lambda}(\xi)|^2\right)^2.\]
As we computed in the previous section, one has 
\begin{align*}
    \left(\sum_{\xi\ne 0}|\widehat{\gamma_\lambda}(\xi)|^2\right)^2=q^8\left(\sum_{\xi\ne 0}|\widehat{E}(\xi)|^2|\widehat{E}(-\lambda \xi)|^2\right)^2.
\end{align*}
Therefore, 
\begin{align*}
    &\sum_{\lambda}\left(\sum_{\xi\ne 0}|\widehat{E}(\xi)|^2|\widehat{E}(-\lambda \xi)|^2\right)^2=\sum_{\xi_1\ne 0, \xi_2\ne 0} |\widehat{E}(\xi_1)|^2|\widehat{E}(\xi_2)|^2\sum_{\lambda} |\widehat{E}(\lambda\xi_1)|^2|\widehat{E}(\lambda\xi_1)|^2\\
    &=\sum_{\xi_1\ne 0, \xi_2\ne 0}|\widehat{E}(\xi_1)|^2|\widehat{E}(\xi_2)|^2\cdot L(\xi_1, \xi_2),
\end{align*}
where 
\[L(\xi_1, \xi_2)=\sum_{\lambda}|\widehat{E}(\lambda\xi_1)|^2|\widehat{E}(\lambda\xi_2)|^2.\]
To proceed further, we need the following lemma. 
\begin{lemma}
For $\xi_1, \xi_2\ne 0$, one has 
\[L(\xi_1, \xi_2)=\sum_{\lambda}|\widehat{E}(\lambda\xi_1)|^2|\widehat{E}(\lambda\xi_2)|^2\ll \frac{|E|^3}{q^6}.\]
\end{lemma}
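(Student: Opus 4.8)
The plan is to decouple the two directions by Cauchy--Schwarz and reduce everything to a single fourth-moment of $\widehat{E}$ along one line through the origin. Applying Cauchy--Schwarz in $\lambda$,
\[
L(\xi_1,\xi_2)\le \Big(\sum_{\lambda}|\widehat{E}(\lambda\xi_1)|^4\Big)^{1/2}\Big(\sum_{\lambda}|\widehat{E}(\lambda\xi_2)|^4\Big)^{1/2},
\]
so it suffices to prove $\sum_{\lambda}|\widehat{E}(\lambda\xi)|^4\ll |E|^3/q^6$ for every fixed $\xi\ne 0$; the claimed bound for $L(\xi_1,\xi_2)$ then follows at once, and since both directions yield the same single-direction estimate, this splitting is lossless at the present precision.

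For the fourth moment I would expand $|\widehat{E}(\lambda\xi)|^2=q^{-4}\sum_{x,y\in E}\chi(\lambda\,\xi\cdot(y-x))$ from the definition of the Fourier transform, square again to write $|\widehat{E}(\lambda\xi)|^4=q^{-8}\sum_{x,y,u,v\in E}\chi\big(\lambda\,\xi\cdot(y-x+v-u)\big)$, and then sum over $\lambda\in\mathbb{F}_q$. By the orthogonality relation $\sum_{\lambda}\chi(\lambda\,\xi\cdot w)=q\cdot\mathbf{1}[\xi\cdot w=0]$, this collapses to
\[
\sum_{\lambda}|\widehat{E}(\lambda\xi)|^4=\frac{1}{q^{7}}\,N,\qquad N:=\#\{(x,y,u,v)\in E^4:\ \xi\cdot(x-y+u-v)=0\}.
\]
Thus the entire problem is reduced to the combinatorial estimate $N\ll q|E|^3$.

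To bound $N$, I would record how $E$ is distributed across the fibres of the projection $x\mapsto \xi\cdot x$, introducing $g(s):=\#\{x\in E:\xi\cdot x=s\}$. The condition $\xi\cdot(x-y+u-v)=0$ reads $\xi\cdot(x+u)=\xi\cdot(y+v)$, so $N=\sum_{s}\big((g*g)(s)\big)^2=\|g*g\|_2^2$. Young's inequality (equivalently, a one-line Cauchy--Schwarz applied to the inner convolution sum) gives $\|g*g\|_2^2\le \|g\|_1^2\|g\|_2^2=|E|^2\|g\|_2^2$. Since each fibre $\{x:\xi\cdot x=s\}$ is an affine line and hence meets $\mathbb{F}_q^2$ in exactly $q$ points, we have $g(s)\le q$, whence $\|g\|_2^2=\sum_s g(s)^2\le q\sum_s g(s)=q|E|$. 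Combining, $N\le |E|^2\cdot q|E|=q|E|^3$, which yields $\sum_{\lambda}|\widehat{E}(\lambda\xi)|^4\le |E|^3/q^6$ and completes the proof.

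The argument is short, and the one place to handle with care is the passage from the line fourth-moment to the quadruple count $N$: the key observation is that $N$ is governed entirely by the one-dimensional projection $x\mapsto \xi\cdot x$, so that the trivial ``at most $q$ points per fibre'' bound is precisely what is needed. No finer spectral input is required here; in particular the spherical average of Lemma \ref{spherical-average}, which is tailored to spheres rather than lines, plays no role in this estimate.
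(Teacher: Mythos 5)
Your proposal is correct and follows essentially the same route as the paper: Cauchy--Schwarz in $\lambda$ to reduce to the fourth moment $\sum_{\lambda}|\widehat{E}(\lambda\xi)|^4$, expansion and orthogonality in $\lambda$ to collapse this to $q^{-7}$ times the count of quadruples satisfying the linear constraint $\xi\cdot(x-y+u-v)=0$, and then the trivial bound $\ll q|E|^3$ for that count. Your fibre-counting formulation via $\|g*g\|_2^2\le\|g\|_1^2\|g\|_2^2$ with $\|g\|_\infty\le q$ is just a repackaging of the paper's ``fix three points, the fourth lies on a line of $q$ points'' estimate, so no substantive difference.
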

\begin{proof}
By the Cauchy-Schwarz inequality, we have 
\[L(\xi_1, \xi_2)\le \left(\sum_{\lambda}|\widehat{E}(\lambda \xi_1)|^4\right)^{1/2}\cdot \left(\sum_{\lambda}|\widehat{E}(\lambda \xi_2)|^4\right)^{1/2}.\]
Moreover, for $m\ne 0$,
\begin{align*}
&\sum_{\lambda}|\widehat{E}(\lambda m)|^4=\frac{1}{q^8}\sum_{x, y, z, w\in E}\sum_{\lambda}\chi(m\lambda(x+y-z-w))\\
&=\frac{1}{q^7}\sum_{x, y, z, w\in E}1_{m\cdot (x+y-z-w)=0}\\
&\ll \frac{|E|^3}{q^6},
\end{align*}
where the last inequality follows from the number of tuples $(x, y, z, w)\in E^4$ such that $x+y-z-w$ lies on the line with the normal vector $m$ and passing through the origin. This completes the proof.
\end{proof}
To continue, we apply the Plancherel formula, 
\begin{align*}
    &\sum_{\lambda}\left(\sum_{\xi\ne 0}|\widehat{E}(\xi)|^2|\widehat{E}(-\lambda \xi)|^2\right)^2=\sum_{\xi_1\ne 0, \xi_2\ne 0}|\widehat{E}(\xi_1)|^2|\widehat{E}(\xi_2)|^2\cdot L(\xi_1, \xi_2)\\
    &\ll \frac{|E|^5}{q^{10}}.
\end{align*}
In other words, we obtain
\[\left\vert \sum_{\lambda}\left(\sum_{z}\gamma_{\lambda}(z)^2\right)^2- \frac{|E|^8}{q^3}\right\vert \ll |E|^6+q^2|E|^5.\]

\section{Acknowledgement}
The author would like to thank Prof. Igor Shparlinski for a number of comments and for informing him about the paper \cite{igor}. 

The author would like to thank to Vietnam Institute for Advanced Study in Mathematics (VIASM) for the hospitality and for the excellent working condition.

\end{document}